\newcommand{\C}{\mathbb{C}}
\newcommand{\HH}{\mathbb{H}}
\newcommand{\Z}{\mathbb{Z}}
\newcommand{\SL}{\operatorname{SL}}
\newtheorem{lem}{Lemma}[section]
\newtheorem{thm}[lem]{Theorem}
\newtheorem{cor}[lem]{Corollary}
\theoremstyle{definition}
\newtheorem{remark}[lem]{Remark}
\newtheorem{definition}[lem]{Definition}
\newcommand{\github}[1]{\href{https://github.com/kwon314159/Explicit_constructions_of_cyclic_N_isogenies/blob/5495d0b472eb6dfdd3325fbc35e2890a0d2235d6/#1}{\path{#1}}}
\title{Explicit constructions of cyclic $N$-isogenies}
\begin{document}
	\begin{abstract}
		The modular curve $X_{0}(N)$ parametrizes elliptic curves together with a cyclic
		subgroup of order~$N$, and hence cyclic $N$-isogenies. While explicit moduli
		descriptions of $X_{1}(N)$ are well developed, a comparable construction for
		$X_{0}(N)$ has remained incomplete. We give a uniform method for constructing
		explicit generators of $\mathbb{C}(X_{0}(N))$, extending an approach of Dowd~\cite{Dowd},
		and use them to obtain a concrete moduli interpretation of cyclic $N$-isogenies.
		This yields explicit formulas for sporadic rational points on $X_{0}(N)$ and
		the associated isogenies, providing a unified solution to the moduli problem
		for $X_{0}(N)$.
	\end{abstract}

	\subjclass{11G05, 11G18}
	\keywords{Elliptic curve, Modular curve, Torsion subgroup, Cyclic $N$-isogeny}

	\thanks{The authors were supported by Basic Science Research Program through
	the National Research Foundation of Korea (NRF) funded by the Ministry of
	Education (No. 2022R1A2C1010487).}

	\author{\sc Daeyeol Jeon}
	\address{Daeyeol Jeon \\ Department of Mathematics Education \\ Kongju National University \\ Gongju, 32588 South Korea}
	\email{dyjeon@kongju.ac.kr}

	\author{\sc Yongjae Kwon}
	\address{Yongjae Kwon \\ Department of Mathematics Education \\ Kongju National University \\ Gongju, 32588 South Korea}
	\email{211049@kongju.ac.kr}

	\maketitle

	\section{Introduction}

	The modular curves $X_{1}(N)$ and $X_{0}(N)$ occupy a central place in the
	arithmetic theory of elliptic curves. The curve $X_{1}(N)$ (with cusps removed)
	parametrizes elliptic curves equipped with a point of exact order~$N$, while
	$X_{0}(N)$ (with cusps removed) parametrizes elliptic curves together with a
	cyclic subgroup of order~$N$. Since every cyclic subgroup $C\subset E$ of order~$N$
	arises as the kernel of a cyclic $N$-isogeny $E\to E'$, parametrizing pairs
	$(E,C)$ is equivalent to parametrizing the associated cyclic $N$-isogenies.

	A substantial body of work---including contributions of Baaziz~\cite{Baaziz}, Dowd~\cite{Dowd},
	Galbraith~\cite{Galbraith-thesis}, Reichert~\cite{Reichert} Sutherland~\cite{sutherland2},
	Tsukazaki~\cite{Tsukazaki}, and Yang~\cite{Yang}---has produced explicit defining
	equations for $X_{0}(N)$ and $X_{1}(N)$. Several of these works go beyond the
	determination of equations by offering explicit methods for solving the
	associated moduli problems, that is, recovering elliptic curves (and their level
	structures) directly from points on the corresponding modular curves.

	For $X_{1}(N)$, Sutherland~\cite{sutherland2} constructed low-degree models by
	exploiting the Tate normal form and enforcing the condition that $(0,0)$ have exact
	order~$N$, thereby giving an efficient and uniform solution to its moduli
	problem. Baaziz~\cite{Baaziz} later provided a more conceptual approach based on
	the Weierstrass $\wp$-function, showing that the $N$-division values of $\wp$
	and $\wp'$ furnish a general mechanism for recovering the elliptic curves
	associated to points on $X_{1}(N)$.

	For $X_{0}(N)$, progress has been more sporadic. In special situations, explicit
	descriptions of cyclic $N$-isogenies yield solutions to its moduli problem.
	When $X_{0}(N)$ has genus~0, or when the Fricke quotient $X_{0}^{+}(N)$ has genus~0,
	explicit parametrizations were obtained by Cremona, Cooley, and collaborators,
	leading to complete models for the associated isogenies in these low-genus cases.
	More recently, Dowd~\cite{Dowd} introduced a systematic framework that uniformly
	treats all genus–0 levels, both for $X_{0}(N)$ and for $X_{0}^{+}(N)$, by expressing
	cyclic $N$-isogenies in terms of carefully chosen modular functions.

	A key observation that emerges from Dowd’s work is that the modular functions he
	uses in the genus–0 cases in fact generate the entire function field $\C(X_{0}(
	N))$. Once this is recognized, it becomes clear that the same philosophy can be
	pushed much further: knowing explicit generators of $\C(X_{0}(N))$ provides
	enough information to reconstruct the elliptic curves and their $N$-cyclic
	isogenies from arbitrary points of $X_{0}(N)$, thereby supplying a conceptual route
	to solving the moduli problem in complete generality.

	The small-degree isogeny tables of the Sage documentation \cite{SageIsogenySmallDegree}
	highlight related work of Cremona and Cooley, who constructed cyclic $N$-isogenies
	not only for the genus~0 cases but also for the sporadic rational points on
	$X_{0}(N)$. Independently, Barrios~\cite{Barrios} obtained analogous
	parametrizations for the sporadic points.

	\medskip

The purpose of this paper is to extend these ideas beyond the low-genus and
sporadic cases. Using explicit generators of the function field
$\mathbb{C}(X_{0}(N))$, we construct cyclic $N$-isogenies from arbitrary points
of $X_{0}(N)$. This yields a complete solution to the moduli problem for
$X_{0}(N)$, without restrictions on the genus. Our approach unifies previously
known constructions within a single framework and applies equally well to
existing models of $X_{0}(N)$ in the literature. In particular, although the
defining equations obtained by Galbraith \cite{Galbraith-thesis} and Yang \cite{Yang} are not accompanied by a
moduli interpretation, our method recovers the associated cyclic
$N$-isogenies directly from their points. We illustrate the effectiveness of
this approach through explicit computations in several sporadic cases.

The results in this paper were obtained using SageMath \cite{sagemath}, and
the codes used to verify all computations are available at
\begin{center}
  \url{https://github.com/kwon314159/Explicit_constructions_of_cyclic_N_isogenies}
\end{center}

	\section{Preliminaries}

	In this section we recall the moduli interpretation of the modular curves $X_{0}
	(N)$ and $X_{1}(N)$, and introduce the Eisenstein series required later for
	writing defining equations of $X_{0}(N)$. Throughout, $\HH$ denotes the
	complex upper half–plane and $q=e^{2\pi i\tau}$.

	\subsection{Modular curves and their moduli problems}

	Let $\Gamma_{0}(N)$ and $\Gamma_{1}(N)$ be the congruence subgroups
	\begin{align*}
		\Gamma_{0}(N)= & \left\{ \begin{pmatrix}a & b \\ c & d\end{pmatrix}\in \SL_{2}(\Z):\; c\equiv 0 \pmod N \right\},                     \\
		\Gamma_{1}(N)= & \left\{ \begin{pmatrix}a & b \\ c & d\end{pmatrix}\in \SL_{2}(\Z):\; c\equiv 0,\ a\equiv d\equiv 1 \pmod N \right\}.
	\end{align*}

	The modular curves $Y_{0}(N)$ and $Y_{1}(N)$ are defined as the quotients
	\[
		Y_{0}(N)=\Gamma_{0}(N) \backslash \HH, \qquad Y_{1}(N)=\Gamma_{1}(N) \backslash
		\HH,
	\]
	and their compactifications $X_{0}(N)$, $X_{1}(N)$ are obtained by adjoining cusps.

	\begin{definition}
		An \emph{enhanced elliptic curve of $\Gamma_{0}(N)$–type} is a pair $(E,C)$,
		where $E/\C$ is an elliptic curve and $C\subset E$ is a cyclic subgroup of order
		$N$. Two such pairs $(E,C)\sim(E',C')$ if there exists an isomorphism
		$\varphi:E\to E'$ satisfying $\varphi(C)=C'$.
	\end{definition}

	\begin{definition}
		An \emph{enhanced elliptic curve of $\Gamma_{1}(N)$–type} is a pair $(E,P)$,
		where $E/\C$ is an elliptic curve and $P\in E$ is a point of exact order $N$.
		Two such pairs $(E,P)\sim(E',P')$ if there exists an isomorphism
		$\varphi:E\to E'$ satisfying $\varphi(P)=P'$.
	\end{definition}

	The fundamental moduli theorem (see e.g.\cite[Theorem~2.1.8]{modular}) states:

	\begin{thm}
		There are natural bijections
		\[
			Y_{0}(N)(\C)\;\longleftrightarrow\;\{(E,C)\}/\sim, \qquad Y_{1}(N)(\C)\;\longleftrightarrow
			\;\{(E,P)\}/\sim.
		\]
		Thus $Y_{0}(N)$ parameterizes elliptic curves equipped with a cyclic subgroup
		of order $N$, while $Y_{1}(N)$ parameterizes elliptic curves equipped with a
		point of order $N$.
	\end{thm}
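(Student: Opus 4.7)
The plan is to reduce both bijections to the classical uniformization $\tau\mapsto E_\tau:=\C/\Lambda_\tau$ with $\Lambda_\tau=\Z\tau+\Z$, which already identifies $\SL_2(\Z)\backslash\HH$ with isomorphism classes of complex elliptic curves. The underlying lemma to recall is that every isomorphism $E_\tau\to E_{\tau'}$ lifts to a $\C$-linear map $z\mapsto \lambda z$ with $\lambda\Lambda_\tau=\Lambda_{\tau'}$, and that the resulting change of basis is an element $\gamma=\begin{pmatrix}a&b\\c&d\end{pmatrix}\in\SL_2(\Z)$ satisfying $\tau'=\gamma\tau$ and $\lambda=c\tau+d$. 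Both bijections will follow by imposing the correct extra data on this isomorphism and reading off the congruence conditions on $\gamma$.

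For the $\Gamma_1(N)$ statement, I would define the map $[\tau]_{\Gamma_1(N)}\mapsto (E_\tau,\,1/N+\Lambda_\tau)$, whose image is clearly of $\Gamma_1(N)$-type. Well-definedness reduces to a direct computation: for $\gamma\in\Gamma_1(N)$, the induced isomorphism $E_{\gamma\tau}\to E_\tau$ sends $1/N+\Lambda_{\gamma\tau}$ to $(c\tau+d)/N+\Lambda_\tau$, which equals $1/N+\Lambda_\tau$ precisely because $c\equiv 0$ and $d\equiv 1\pmod N$. For the $\Gamma_0(N)$ statement, I would send $[\tau]_{\Gamma_0(N)}$ to $(E_\tau,\,\langle 1/N+\Lambda_\tau\rangle)$; here the weaker congruence $c\equiv 0\pmod N$ suffices, since then $(c\tau+d)/N$ generates the same cyclic subgroup of order $N$ as $1/N$. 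Surjectivity in both cases comes from writing an arbitrary enhanced pair $(E,P)$ (or $(E,C)$) in the form $E=\C/\Lambda$ with $\Lambda=\Z\omega_1+\Z\omega_2$ and $\omega_1/\omega_2\in\HH$, lifting $P$ to $(a\omega_1+b\omega_2)/N$ with $\gcd(a,b,N)=1$, and then choosing an element of $\SL_2(\Z)$ that sends $(a,b)$ to $(1,0)\pmod N$; for a cyclic subgroup, any choice of generator reduces to the previous situation.

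Injectivity is then just the converse of the well-definedness computation: if $(E_\tau,1/N)\sim (E_{\tau'},1/N)$, the uniformization forces some $\gamma\in\SL_2(\Z)$ with $\tau'=\gamma\tau$, and the requirement that the point $1/N$ be preserved translates exactly into $c\equiv 0$ and $d\equiv 1\pmod N$, i.e. $\gamma\in\Gamma_1(N)$; for subgroups one gets only $c\equiv 0\pmod N$, i.e. $\gamma\in\Gamma_0(N)$. The main obstacle is not conceptual but bookkeeping: one must keep track of the precise way an isomorphism of complex tori factors as a rescaling composed with an $\SL_2(\Z)$-change of basis, and identify which entries are constrained modulo $N$ when one remembers a point of order $N$ versus merely a cyclic subgroup of order $N$. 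In particular, checking injectivity for $\Gamma_0(N)$ requires accounting for the $(\Z/N\Z)^\times$-ambiguity in the choice of a generator of $C$, which is exactly the ambiguity that distinguishes $\Gamma_0(N)$ from $\Gamma_1(N)$ inside $\SL_2(\Z)$.
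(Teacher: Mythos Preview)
The paper does not supply its own proof of this theorem: it is stated as a background fact with the citation ``see e.g.\ \cite[Theorem~2.1.8]{modular}'' and no argument is given. Your sketch is correct and is precisely the standard proof one finds in that reference (Diamond--Shurman), so there is nothing to compare; you have reproduced the argument the paper is implicitly invoking.
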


	In particular, a point $\tau\in Y_{0}(N)$ corresponds to the cyclic subgroup $C
	_{\tau}=\langle 1/N\rangle\subset \C/\Lambda_{\tau}$, and hence to the cyclic
	$N$–isogeny
	\[
		\C/\Lambda_{\tau} \longrightarrow \C/\Lambda_{N\tau},\quad z\mapsto Nz,
	\]
	where $\Lambda_{\tau}=\Z+\tau\Z$.

	\subsection{Eisenstein series and defining equations}

	For even $k\ge 4$, the normalized Eisenstein series is defined by
	\[
		E_{k}(\tau)=1-\frac{2k}{B_{k}}\sum_{n\ge 1}\sigma_{k-1}(n)q^{n},
	\]
	where $B_{k}$ is the $k$-th Bernoulli number and $\sigma_{k-1}(n)=\sum_{d\mid n}
	d^{k-1}$. These are holomorphic modular forms of weight $k$ on $\SL_{2}(\Z)$ (cf.
	\cite{modular}).

	It is customary to set
	\[
		G_{k}(\tau)=2\zeta(k)\,E_{k}(\tau),
	\]
	which appear naturally in the analytic uniformization of elliptic curves.

	The Weierstrass model associated with the lattice $\Lambda_{\tau}$ is
	\[
		E_{\tau}:\quad y^{2}=4x^{3}-60G_{4}(\tau)x-140G_{6}(\tau),
	\]
	and every complex elliptic curve arises in this way (Uniformization Theorem, \cite[Proposition
	VI.3.6 and Corollary VI.5.1.1]{silverman}).

	For arithmetic applications it is convenient to use the rescaled forms
	\[
		a_{4}(\tau)=-\frac{E_{4}(\tau)}{48},\qquad a_{6}(\tau)=\frac{E_{6}(\tau)}{864}
		,
	\]
	so that an elliptic curve may be written as
	\[
		E_{\tau}:\quad y^{2}=x^{3}+a_{4}(\tau)x+a_{6}(\tau).
	\]

	For later use we recall the standard Eisenstein series of weight $2$ and level
	$N$,
	\[
		E_{2}^{(N)}(\tau) :=\frac{1}{2\pi i}\frac{d}{d\tau}\log\!\left( \frac{\eta(N\tau)}{\eta(\tau)}
		\right) = \frac{N-1}{24}+\sum_{n\ge1}\sigma_{1}(n)\bigl(q^{n} - N q^{Nn}\bigr
		),
	\]
	where $\eta$ is the Dedekind eta function. Then $E_{2}^{(N)}$ is a modular form
	of weight $2$ on $\Gamma_{0}(N)$ and is anti–invariant under the Fricke involution:
	\[
		E_{2}^{(N)}\left(\frac{-1}{N\tau}\right) = -N\tau^{2}\, E_{2}^{(N)}(\tau).
	\]

	\subsection{Genus $0$ case and rational construction of cyclic $N$–isogenies}

	When the modular curve $X_{0}(N)$ has genus $0$, its function field is generated
	by a single Hauptmodul $h_{N}$, i.e.\
	\[
		\C(X_{0}(N))=\C(h_{N}), \qquad h_{N} = q^{-1}+ O(1).
	\]
	Dowd~\cite{Dowd}, §3.3, uses this fact to give \emph{explicit rational
	formulas} for the coefficients of elliptic curves in a cyclic $N$–isogeny pair.

	More precisely, let
	\[
		E_{\tau}:\quad y^{2} = x^{3} + a_{4}(\tau)x + a_{6}(\tau), \qquad a_{4}(\tau)
		=-\frac{E_{4}(\tau)}{48\,E_{2}^{(N)}(\tau)^{2}},\quad a_{6}(\tau)=\frac{E_{6}(\tau)}{864\,E_{2}^{(N)}(\tau)^{3}}
		,
	\]
	be the model obtained from the Tate curve by taking
	$\lambda(\tau)=E_{2}^{(N)}(\tau)$. The isogenous curve corresponding to the dual
	point $N\tau$ on $Y_{0}(N)$ is then
	\[
		E_{N\tau}:\quad y^{2} = x^{3} + a'_{4}(\tau)x + a'_{6}(\tau), \qquad a'_{4}(\tau
		)=-\frac{E_{4}(N\tau)}{48\,E_{2}^{(N)}(\tau)^{2}},\quad a'_{6}(\tau)=\frac{E_{6}(N\tau)}{864\,E_{2}^{(N)}(\tau)^{3}}
		.
	\]

	Since $X_{0}(N)$ has genus $0$, both $a_{4}$, $a_{6}$ and their transforms
	$a'_{4}$, $a'_{6}$ can be expressed as rational functions of $h_{N}$ by comparing
	$q$–expansions. Dowd carries this out explicitly, obtaining rational
	expressions
	\[
		a_{4} = A_{4}(h_{N}),\qquad a_{6} = A_{6}(h_{N}),\qquad a'_{4} = A'_{4}(h_{N}
		),\qquad a'_{6} = A'_{6}(h_{N}),
	\]
	where each right–hand side is a rational function in $h_{N}$ with integer
	coefficients.

	Thus, when $X_{0}(N)$ has genus $0$, \emph{the entire cyclic $N$–isogeny}
	\[
		E \longrightarrow E'
	\]
	is described explicitly by rational functions in the single parameter $h_{N}$.
	This provides a uniform algebraic construction of all cyclic $N$–isogenies parameterized
	by $X_{0}(N)$, and is the basis for the tables of isogeny coefficients
	appearing in Dowd’s work.

	\section{Constructing cyclic $N$-isogenies}

	As explained in Section 2.3, when $X_{0}(N)$ has genus $0$ the modular
	functions $a_{4}$ and $a_{6}$ defined by
	\[
		a_{4}(\tau)=-\frac{E_{4}(\tau)}{48\,E_{2}^{(N)}(\tau)^{2}},\qquad a_{6}(\tau)
		=\frac{E_{6}(\tau)}{864\,E_{2}^{(N)}(\tau)^{3}}
	\]
	admit explicit expressions as rational functions of a Hauptmodul, and this suffices
	to solve the moduli problem for cyclic $N$-isogenies.

	This phenomenon does not depend solely on the rationality of $X_{0}(N)$;
	rather, Dowd's insightful choice of the modular functions $a_{4}$ and $a_{6}$ points
	to a principle of broader validity. These functions in fact generate the
	function field of $X_{0}(N)$, which allows the moduli problem to be treated uniformly
	for all $N$.

	In this section, we establish that $(a_{4},a_{6})$ separates the points of
	$Y_{0}(N)$, which in turn implies that they generate the function field $\C(X_{0}
	(N))$.

	\subsection{Generators of the function field}

	We first show that the map defined by these invariants is injective, thereby embedding
	the modular curve into its image in $\C^{2}$.

	\begin{thm}
		\label{thm:bijection} Let \(Y_{0}'(N) \subset Y_{0}(N)\) denote the set of points \([\tau]\) such that
\(E_{2}^{(N)}(\tau)\neq 0\). Then, the map $\Phi: Y_{0}^{\prime}(N) \to \C^{2}$ defined by $[\tau]
		\mapsto (a_{4}(\tau), a_{6}(\tau))$ is well-defined and injective. That is, for
		any $\tau, z \in \HH$,
		\[
			a_{4}(\tau) = a_{4}(z) \quad \text{and}\quad a_{6}(\tau) = a_{6}(z) \iff z
			= \gamma \tau \quad \text{for some }\gamma \in \Gamma_{0}(N).
		\]
	\end{thm}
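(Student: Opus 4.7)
Well-definedness (and the $\Leftarrow$ direction) is a routine invariance check: since $E_4, E_6$ are modular of weights $4, 6$ on $\SL_2(\Z)$ and $E_2^{(N)}$ is modular of weight $2$ on $\Gamma_0(N)$ by Section~2.2, each of $a_4$ and $a_6$ is a ratio of two modular forms of equal weight on $\Gamma_0(N)$, hence $\Gamma_0(N)$-invariant, and descends to a well-defined function on $Y_0'(N)$. For the $\Rightarrow$ direction, my plan is to first reduce to an $\SL_2(\Z)$-equivalence and then upgrade it to a $\Gamma_0(N)$-equivalence.

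The reduction rests on a direct computation: using $c_4 = -48\,a_4(\tau)$ and $c_6 = -864\,a_6(\tau)$, the $E_2^{(N)}$-factors cancel inside $j = 1728\,c_4^3/(c_4^3 - c_6^2)$, so the $j$-invariant of $y^2 = x^3 + a_4(\tau) x + a_6(\tau)$ equals $1728\,E_4(\tau)^3/(E_4(\tau)^3 - E_6(\tau)^2) = j(\tau)$. Given the hypothesis, this yields $j(\tau) = j(z)$ and hence $z = \gamma\tau$ for some $\gamma = \begin{pmatrix}a & b \\ c & d\end{pmatrix} \in \SL_2(\Z)$. Setting $u := E_2^{(N)}(\gamma\tau)/E_2^{(N)}(\tau)$ and invoking $E_k(\gamma\tau) = (c\tau+d)^k E_k(\tau)$ for $k \in \{4, 6\}$, the equalities $a_4(\tau) = a_4(\gamma\tau)$ and $a_6(\tau) = a_6(\gamma\tau)$ force $u^2 = (c\tau+d)^4$ and $u^3 = (c\tau+d)^6$, respectively (assuming $E_4(\tau), E_6(\tau) \neq 0$); jointly these pin down $u = (c\tau+d)^2$, giving
\[
E_2^{(N)}(\gamma\tau) = (c\tau+d)^2\, E_2^{(N)}(\tau).
\]
The finitely many $\SL_2(\Z)$-orbits of elliptic points on which $E_4$ or $E_6$ vanishes are handled separately by direct case analysis.

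The main obstacle is extracting $\gamma \in \Gamma_0(N)$ from this weight-$2$ identity. My plan is to write $E_2^{(N)} = (N E_2(N\tau) - E_2(\tau))/24$ and apply the quasi-modular transformation $E_2(\gamma\tau) = (c\tau+d)^2 E_2(\tau) + \tfrac{6c(c\tau+d)}{\pi i}$. When $N \mid c$, one has $N\gamma\tau = \gamma'(N\tau)$ with $\gamma' = \begin{pmatrix}a & bN \\ c/N & d\end{pmatrix} \in \SL_2(\Z)$, and the quasi-modular cocycles in $E_2(\gamma\tau)$ and $N E_2(N\gamma\tau)$ cancel exactly to recover the weight-$2$ identity. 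When $N \nmid c$, the analogous matrix is non-integral, and handling $E_2(N\gamma\tau)$ requires a Hecke-style decomposition of the rational matrix $\begin{pmatrix}a & bN \\ c/N & d\end{pmatrix}$ of determinant $(N/\gcd(c,N))^2$, producing a nonzero correction to the weight-$2$ identity. The hard part will be arguing that this correction cannot accidentally vanish at the specific $\tau$ in question; I expect this to be completed either by a careful $q$-expansion analysis of the obstruction, or more conceptually by upgrading to the global statement $\C(a_4, a_6) = \C(X_0(N))$ (making $\Phi$ birational onto its image) and then ruling out fibre coincidences via the moduli interpretation of $Y_0(N)$.
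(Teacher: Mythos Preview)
Your plan follows the paper's proof almost verbatim: invariance check for well-definedness, reduction to $j(\tau)=j(z)$ and hence $z=\gamma\tau$ with $\gamma\in\SL_2(\Z)$, extraction of the relation $E_2^{(N)}(\gamma\tau)=\epsilon(c\tau+d)^2 E_2^{(N)}(\tau)$ (the paper keeps a root of unity $\epsilon$ to absorb the elliptic-point cases rather than splitting them off as you do), and finally an appeal to the quasi-modular transformation of $E_2$ to force $N\mid c$.

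On that last step the paper is brisker than you are: it simply writes that the combination $E_2(\tau)-NE_2(N\tau)$ ``eliminates the non-modular term if and only if $c\equiv 0\pmod N$'' and concludes. The concern you isolate --- that the weight-$2$ identity has only been established at the particular $\tau$ in hand, so the obstruction for some $\gamma\notin\Gamma_0(N)$ might in principle vanish accidentally there --- is genuine, and the paper's proof does not engage with it; it tacitly passes from a pointwise identity to a functional one. So you have in fact been more scrupulous than the paper about precisely the step you flag as hard. One caution on your second proposed completion: in the paper's logical structure, the equality $\C(a_4,a_6)=\C(X_0(N))$ is stated as a \emph{corollary} of this theorem, so invoking it here would be circular; if you want to run that route you must establish the function-field equality independently (for instance by showing directly that the weight-$2$ slash-stabilizer of $E_2^{(N)}$ in $\SL_2(\Z)$ is exactly $\Gamma_0(N)$, which is a statement about identities of functions and avoids the pointwise trap).
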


	\begin{proof}
		First, we verify that $a_{4}$ and $a_{6}$ are well-defined on
		$Y_{0}^{\prime}(N)$. Recall that a point on $Y_{0}(N)$
		represents an equivalence class of pairs $(E, C)$. An isomorphism between
		elliptic curves (a twist) corresponds to a scaling of the lattice
		$\Lambda_{\tau} \to \lambda \Lambda_{\tau}$. Under this scaling, the
		Eisenstein series transform as $E_{k} \to \lambda^{-k}E_{k}$. Crucially, for
		$\gamma \in \Gamma_{0}(N)$, the function $E_{2}^{(N)}$ transforms as a modular
		form of weight 2, i.e.,
		$E_{2}^{(N)}(\gamma\tau) = (c\tau+d)^{2} E_{2}^{(N)}(\tau)$. If we set $\lambda
		= (c\tau+d)^{-1}$, this means the twist factor $E_{2}^{(N)}$ scales
		exactly as required to cancel the weights of $E_{4}$ and $E_{6}$. Explicitly,
		\[
			a_{4}(\gamma\tau) = -\frac{(c\tau+d)^{4} E_{4}(\tau)}{48 ((c\tau+d)^{2} E_{2}^{(N)}(\tau))^{2}}
			= a_{4}(\tau),
		\]
		and similarly $a_{6}(\gamma\tau) = a_{6}(\tau)$. This confirms that $a_{4}$ and
		$a_{6}$ are invariant under the action of $\Gamma_{0}(N)$.

		Conversely, suppose that $a_{4}(\tau) = a_{4}(z)$ and
		$a_{6}(\tau) = a_{6}(z)$. The equality of these invariants implies the equality
		of the $j$-invariant, since $j$ is a rational function of $a_{4}$ and
		$a_{6}$. Since $j(\tau) = j(z)$, there exists $\gamma =
		\begin{pmatrix}
			a & b \\
			c & d
		\end{pmatrix}
		\in \SL_{2}(\Z)$ such that $z = \gamma \tau$. We must show that $\gamma \in \Gamma
		_{0}(N)$.

		Substituting $z = \gamma \tau$ into the relations $a_{4}(z) = a_{4}(\tau)$ and
		$a_{6}(z) = a_{6}(\tau)$, and using the modularity of $E_{4}$ and $E_{6}$, we
		obtain:
		\[
			\frac{(c\tau+d)^{4} E_{4}(\tau)}{E_{2}^{(N)}(\gamma\tau)^{2}}= \frac{E_{4}(\tau)}{E_{2}^{(N)}(\tau)^{2}}
			\quad \text{and}\quad \frac{(c\tau+d)^{6} E_{6}(\tau)}{E_{2}^{(N)}(\gamma\tau)^{3}}
			= \frac{E_{6}(\tau)}{E_{2}^{(N)}(\tau)^{3}}.
		\]
		Since the discriminant $\Delta(\tau) \neq 0$, the values $E_{4}(\tau)$ and $E
		_{6}(\tau)$ cannot vanish simultaneously. In either case, we deduce that
		\[
			E_{2}^{(N)}(\gamma\tau) = \epsilon (c\tau+d)^{2} E_{2}^{(N)}(\tau)
		\]
		for some root of unity $\epsilon$. This implies that $E_{2}^{(N)}$
		transforms as a modular form of weight 2 under $\gamma$. Recall the quasi-modular
		transformation law for $E_{2}$:
		\[
			E_{2}(\gamma\tau) = (c\tau+d)^{2} E_{2}(\tau) + \frac{6c(c\tau+d)}{\pi i},
			\quad \text{for }\gamma \in \SL_{2}(\Z).
		\]

		Consequently, the linear combination
		$E_{2}^{(N)}(\tau) = E_{2}(\tau) - N E_{2}(N\tau)$ eliminates the non-modular
		term if and only if $c \equiv 0 \pmod N$. Therefore, we must have
		$\gamma \in \Gamma_{0}(N)$.
	\end{proof}

	It is a fundamental result in the theory of Riemann surfaces that a subset $S$
	of the function field $\C(X)$ generates $\C(X)$ if and only if there exists a non-empty
	open subset $U \subseteq X$ on which $S$ separates points. Explicitly, this condition
	means that for any two points $P, Q \in U$, if $\phi(P) = \phi(Q)$ for all
	$\phi \in S$, then $P = Q$. \Cref{thm:bijection} establishes this injectivity
	for the pair $S = \{a_{4}, a_{6}\}$ on the open subset
	$Y_{0}'(N) \subset X_{0}(N)$, since $Y_0'(N)$ is obtained from $X_0(N)$ by removing finitely many
    points, namely the cusps and the zeros of $E_2^{(N)}(\tau)$. 
    Consequently, these functions constitute a
	generating set for the function field.

	\begin{cor}
		\label{cor:generators} The function field of the modular curve $X_{0}(N)$ is
		generated by $a_{4}$ and $a_{6}$. That is,
		\[
			\C(X_{0}(N)) = \C(a_{4}, a_{6}).
		\]
	\end{cor}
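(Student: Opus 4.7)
The plan is to deduce the corollary directly from \Cref{thm:bijection} via the Riemann-surface principle stated in the paragraph preceding it: a subset $S \subset \C(X)$ generates the function field of a compact Riemann surface $X$ if and only if $S$ separates points on some non-empty open subset of $X$. Thus the only work is to verify the hypotheses of this principle in our setting.

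First I would establish the trivial inclusion $\C(a_4, a_6) \subseteq \C(X_0(N))$. This is immediate from the proof of \Cref{thm:bijection}, which already shows that $a_4$ and $a_6$ are $\Gamma_0(N)$-invariant meromorphic functions on $\HH$; they extend meromorphically to the cusps because $E_4$, $E_6$, and $E_2^{(N)}$ each have holomorphic $q$-expansions there. Hence $a_4$ and $a_6$ define bona fide elements of $\C(X_0(N))$.

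Next I would verify that $Y_0'(N)$ is a non-empty Zariski-open subset of $X_0(N)$. Its complement consists of the finitely many cusps together with the zeros of $E_2^{(N)}$ in $Y_0(N)$; since $E_2^{(N)}$ is a non-zero modular form of weight $2$ on $\Gamma_0(N)$, the valence formula bounds its total number of zeros on $X_0(N)$, so the complement is indeed finite. By \Cref{thm:bijection}, the pair $(a_4, a_6)$ separates the points of $Y_0'(N)$, and the Riemann-surface principle then yields $\C(a_4, a_6) = \C(X_0(N))$ at once.

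I do not expect any substantive obstacle here, as the entire argument is a clean application of \Cref{thm:bijection}. The only point requiring attention is the finiteness of the complement of $Y_0'(N)$, which follows from the compactness of $X_0(N)$ together with the valence formula for weight-$2$ modular forms.
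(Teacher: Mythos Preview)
Your proposal is correct and follows essentially the same route as the paper: the paper's justification of the corollary is precisely the paragraph preceding it, which invokes the same Riemann-surface separating-points principle, notes that $Y_0'(N)$ is the complement in $X_0(N)$ of the cusps together with the zeros of $E_2^{(N)}$ (hence finite), and appeals to \Cref{thm:bijection}. Your version merely adds a little extra care---the explicit inclusion $\C(a_4,a_6)\subseteq\C(X_0(N))$ and the valence-formula remark for the finiteness of the zero set---but the argument is otherwise identical.
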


	By Corollary~\ref{cor:generators}, the algebraic relation satisfied by $a_{4}$
	and $a_{6}$ yields a plane defining equation for $X_{0}(N)$. However, much
	like the modular equation relating $j(\tau)$ and $j(N\tau)$, this equation typically
	has very high degree and extremely large coefficients, making it rather
	impractical for solving the associated moduli problem. For instance, already
	in the lowest level of positive genus, $N=11$, the defining equation takes the
	following form:

	\begin{align}
		\label{eq:11} & -29241x^{6} - 23955822x^{5} - 1351692x^{4}y + 572544x^{3}y^{2} - 15183229435x^{4}    \\
		\notag        & \quad + 7092313360x^{3}y - 1934162736x^{2}y^{2} + 235016704xy^{3} - 10061824y^{4}    \\
		\notag        & \quad + 103990630700x^{3} - 301970625000x^{2}y + 47640642720xy^{2} - 4119072320y^{3} \\
		\notag        & \quad - 2009614509375x^{2} + 2923075650000xy - 2204530508400y^{2}                    \\
		\notag        & \quad + 1296871230050x - 5894869227500y + 285311670611=0.
	\end{align}

	Because of this complexity, we do not rely on this model directly. Instead, we
	employ the models developed by Galbrath
	\cite{Galbraith-thesis} and Yang \cite{Yang}, or use a plane equation defined by suitable rational functions
	in cuspforms of weight 2 for $\Gamma_{0}(N)$.

	We now explain how the moduli problem for $X_{0}(N)$ can be solved in the case
	$N=11$. We begin by constructing the cyclic $N$-isogeny
	$E_{\tau} \to E_{N\tau}$ using the $q$-expansions of the invariants
	$a_{4}, a_{6}, a_{4}', a_{6}'$. This can be carried out by the following procedure:
	\begin{algorithm}
		\caption{Explicit Construction of Cyclic $N$-Isogeny Pairs}
		\label{al:isogeny}
		\begin{algorithmic}
			[1] \Require Level $N$, generators $X, Y$ of the function field
			$\mathbb{Q}(X_{0}(N))$, and a non-cuspidal point $P \in X_{0}(N)(\mathbb{Q}
			)$. \Ensure The pair of elliptic curves $(E, E')$ linked by a cyclic $N$-isogeny.

			\State Define the modular functions corresponding to the Weierstrass
			coefficients of $E$ and $E'$:
			\[
				\begin{aligned}
					a_{4}  & = \frac{-E_4(\tau)}{48 (E_2^{(N)}(\tau))^2},  & a_{6}  & = \frac{E_6(\tau)}{864 (E_2^{(N)}(\tau))^3},  \\
					a_{4}' & = \frac{-E_4(N\tau)}{48 (E_2^{(N)}(\tau))^2}, & a_{6}' & = \frac{E_6(N\tau)}{864 (E_2^{(N)}(\tau))^3}.
				\end{aligned}
			\]

			\State Express these invariants as rational functions in the coordinate
			ring of $X_{0}(N)$:
			\[
				a_{4} = \mathcal{A}(X, Y), \quad a_{6} = \mathcal{B}(X, Y) \quad \text{and}
				\quad a_{4}' = \mathcal{A}'(X, Y), \quad a_{6}' = \mathcal{B}'(X, Y).
			\]

			\State Evaluate the rational functions at the point $P$:
			\[
				(A, B) \leftarrow (\mathcal{A}(P), \mathcal{B}(P)) \quad \text{and}\quad
				(A', B') \leftarrow (\mathcal{A}'(P), \mathcal{B}'(P)).
			\]

			\State \Return the pair of curves $(E, E')$:
			\[
				E: y^{2} = x^{3} + Ax + B, \qquad E': y^{2} = x^{3} + A'x + B'.
			\]
		\end{algorithmic}
	\end{algorithm}

	\noindent
	Here we remark that the second step of Algorithm \ref{al:isogeny}, which
	consists in expressing these invariants as rational functions in the
	coordinate ring of $X_{0}(N)$, is carried out following \cite[Algorithm~4.3]{Jeon2025}.

	In the next step, we choose a ``good'' model of $X_{0}(N)$, i.e., one whose defining
	equation has relatively small degree and manageable coefficients, such as the
	model constructed by Yang \cite{Yang}. Once such a model is fixed, we
	determine the points on $X_{0}(N)$ that arise in the correspondence relevant
	to the moduli problem; depending on the level, this may include computing its
	$\mathbb{Q}$-rational points when they exist, as in the case of $X_{0}(11)$.
	For illustration, Yang's model of $X_{0}(11)$ is given by the equation
	\[
		X_{0}(11):\qquad Y^{2} + Y = X^{3} - X^{2} - 10X - 20,
	\]
	and the corresponding $q$-expansions of the functions $X$ and $Y$ are
	\[
		\begin{aligned}
			X & = q^{-2}+ 2q^{-1}+ 4 + 5q + 8q^{2} + q^{3} + 7q^{4} - 11q^{5} + \cdots,                \\
			Y & = q^{-3}+ 3q^{-2}+ 7q^{-1}+ 12 + 17q + 26q^{2} + 19q^{3} + 37q^{4} - 15q^{5} - \cdots.
		\end{aligned}
	\]

	The curve $X_{0}(11)$ has exactly five $\mathbb{Q}$-rational points, of which three
	are non-cuspidal:
	\[
		(5,5),\qquad (5,-6),\qquad (16,-61).
	\]
	These points correspond to the isomorphism classes of elliptic curves equipped
	with $11$-cyclic isogenies, and, since they are $\mathbb{Q}$-rational, the corresponding
	elliptic curves and isogenies admit $\mathbb{Q}$-rational models.

	Next, we determine the cyclic $11$-isogenies corresponding to the three
	$\mathbb{Q}$-rational points on $X_{0}(11)$. Recall that the generators of our
	model, denoted by $x = a_{4}$ and $y = a_{6}$, have the following $q$-expansions:
	\[
		\begin{aligned}
			x & = -\frac{3}{25}- \frac{3528}{125}q - \frac{75816}{625}q^{2} + \frac{1097856}{3125}q^{3} + \frac{593496}{3125}q^{4} - \frac{106231824}{78125}q^{5}+ \cdots,        \\
			y & = \frac{2}{125}- \frac{5112}{625}q - \frac{649512}{3125}q^{2} - \frac{485856}{3125}q^{3} + \frac{229154456}{15625}q^{4} + \frac{634190256}{390625}q^{5}+ \cdots .
		\end{aligned}
	\]

	Using Algorithm \ref{al:isogeny}, we can express our generators as rational
	functions in Yang's generators. This yields a rational map from Yang's model to
	our model, expressed as:
	\[
		x =\mathcal{A}(X,Y)= \frac{A_{Y}(X)Y + A_{X}(X)}{Q(X)^{2}}, \qquad y =\mathcal{B}
		(X,Y) = \frac{B_{Y}(X)Y + B_{X}(X)}{Q(X)^{3}},
	\]
	where the common factor in the denominators is the quadratic polynomial
	\[
		Q(X) = 25X^{2} + 86X + 89.
	\]
	The numerator polynomials are given by:
	{\allowdisplaybreaks \begin{align*}A_{Y}(X)&= -17640 X^{2} - 106344 X - 107568, \\ A_{X}(X)&= -75 X^{4} - 93972 X^{3} - 445362 X^{2} - 881916 X - 738867, \\ B_{Y}(X)&= -127800 X^{4} - 10626696 X^{3}- 51849288 X^{2}- 85057272 X - 45566928, \\ B_{X}(X)&= 250 X^{6}- 3372780 X^{5}- 33335514 X^{4}- 136910656 X^{3}\\&\quad - 317360754 X^{2}- 408243108 X - 220844302.\end{align*} }
	Under this map, the $\mathbb{Q}$-rational points listed above correspond to the
	following $\mathbb{Q}$-rational points on our model:

	\[
		\left(-\frac{4323}{169}, -\frac{109406}{2197}\right), \qquad \left(-\frac{33}{2}
		, -\frac{847}{32}\right), \qquad \left(-\frac{363}{169}, -\frac{10406}{2197}\right
		).
	\]

	Similarly, we derived the explicit rational functions for the invariants of the
	codomain curve $E'$, denoted by $a_{4}'=\mathcal{A}'(X,Y)$ and
	$a_{6}'=\mathcal{B}'(X,Y)$. Evaluating these functions at the rational points allows
	us to construct the explicit models for the isogenies over $\mathbb{Q}$.

	For instance, substituting the point $P=(5,5)$ into our expressions yields the
	isogeny pair:
	\[
		E: y^{2} = x^{3} - \frac{4323}{169}x - \frac{109406}{2197}\quad \xrightarrow{\phi_{11}}
		\quad E': y^{2} = x^{3} - \frac{3}{169}x + \frac{86}{24167}.
	\]
	The elliptic curves constructed by our algorithm are defined over $\mathbb{Q}$,
	but their coefficients involve fractions due to the canonical normalization of
	the modular generators. Recall that a rational point on $X_{0}(N)$ parametrizes
	the $\overline{\mathbb{Q}}$-isomorphism class of the isogeny, which over
	$\mathbb{Q}$ splits into a family of quadratic twists. Our analysis identifies
	the computed domain curve $E$ as the quadratic twist of the standard minimal curve
	\href{https://www.lmfdb.org/EllipticCurve/Q/121.a1}{\textbf{121a1}} by the factor $D=39$. The codomain curve $E'$ is identified as the
	twist of \href{https://www.lmfdb.org/EllipticCurve/Q/121.c1}{\textbf{121c1}} by the factor $D'=-429$.

	The results for all non-cuspidal rational points are summarized in Table~\ref{tab:x011_results}.
	The notation $\mathbf{L}^{(D)}$ in the table denotes the quadratic twist of the
	curve with Cremona label $\mathbf{L}$ by the factor $D$.
	\begin{table}[h]
		\centering
		\renewcommand{\arraystretch}{1.5}
		\begin{tabular}{c|cc|c}
			\hline
			\textbf{Point} $P$ & \textbf{Domain Curve} $E$                   & \textbf{Codomain Curve} $E'$               & \textbf{Isogeny Class}                                                                                                                                         \\
			$(X, Y)$           & $(a_{4}, a_{6})$                            & $(a_{4}', a_{6}')$                         & $E \to E'$                                                                                                                                                     \\
			\hline
			$(5, 5)$           & $(-\frac{4323}{169}, -\frac{109406}{2197})$ & $(-\frac{3}{169}, \frac{86}{24167})$       & \href{https://www.lmfdb.org/EllipticCurve/Q/121.a1}{\textbf{121a1}}$^{(39)}\to$ \href{https://www.lmfdb.org/EllipticCurve/Q/121.c1}{\textbf{121c1}}$^{(-429)}$ \\
			$(16, -61)$        & $(-\frac{363}{169}, -\frac{10406}{2197})$   & $(-\frac{393}{1859}, \frac{9946}{265837})$ & \href{https://www.lmfdb.org/EllipticCurve/Q/121.c1}{\textbf{121c1}}$^{(39)}\to$ \href{https://www.lmfdb.org/EllipticCurve/Q/121.a1}{\textbf{121a1}}$^{(-429)}$ \\
			$(5, -6)$          & $(-\frac{33}{2}, -\frac{847}{32})$          & $(-\frac{3}{22}, \frac{7}{352})$           & \href{https://www.lmfdb.org/EllipticCurve/Q/121.b1}{\textbf{121b1}}$^{(-486)}\to$ \href{https://www.lmfdb.org/EllipticCurve/Q/121.b1}{\textbf{121b1}}$^{(66)}$ \\
			\hline
		\end{tabular}
		\caption{Explicit $\mathbb{Q}$-rational cyclic 11-isogenies constructed from
		points on $X_{0}(11)$. }
		\label{tab:x011_results}
	\end{table}
	\begin{remark}
		The points $P=(5,5)$ and $P=(16,-61)$ are related by the Fricke involution $w
		_{11}$ on $X_{0}(11)$. Since $w_{11}$ maps an isogeny $\phi: E \to E'$ to
		its dual $\hat{\phi}: E' \to E$, it swaps the isomorphism classes of the domain
		and codomain. Our explicit construction captures this symmetry: the second row
		of Table~\ref{tab:x011_results} is effectively the dual of the first row.
		The third point $P=(5,-6)$ corresponds to an elliptic curve admitting an
		endomorphism of degree 11 defined over $\mathbb{Q}$.
	\end{remark}
	\section{Sporadic Cases}

	In this section, we extend the explicit construction of cyclic $N$-isogenies
	to the finite set of levels $N$ for which the modular curve $X_{0}(N)$ has
	positive genus but admits non-cuspidal $\mathbb{Q}$-rational points:
	\[
		N \in \{ 11, 14, 15, 17, 19, 21, 27, 37, 43, 67, 163 \}.
	\]
	The existence and count of these rational points were established through the celebrated
	work of Mazur for prime levels \cite{mazur78}, and subsequently completed for
	composite levels by Kenku \cite{kenku}, Ligozat, and Oesterl\'{e}
	\cite{oesterle}. These foundational works confirmed that the set of such levels
	is finite and determined the exact number of non-cuspidal rational points,
	denoted by $\nu_{N}$, for each case. The values of $\nu_{N}$ are summarized in
	Table \ref{tab:rational_point_counts}.

	\begin{table}[h!]
		\centering
		\caption{Number of non-cuspidal $\mathbb{Q}$-rational points $\nu_{N}$ on
		$X_{0}(N)$ for sporadic levels.}
		\label{tab:rational_point_counts}
		\renewcommand{\arraystretch}{1.2}
		\begin{tabular}{|c||c|c|c|c|c|c|c|c|c|c|c|}
			\hline
			$N$       & 11 & 14 & 15 & 17 & 19 & 21 & 27 & 37 & 43 & 67 & 163 \\
			\hline
			$\nu_{N}$ & 3  & 2  & 4  & 2  & 1  & 4  & 1  & 2  & 1  & 1  & 1   \\
			\hline
		\end{tabular}
	\end{table}
	Building on these classification results, Barrios \cite{Barrios} recently provided
	an explicit classification of the isogeny graphs for all rational elliptic curves
	admitting non-trivial isogenies. He identified that the non-cuspidal points on
	$X_{0}(N)$ correspond, up to quadratic twist, to specific isogeny classes of
	elliptic curves listed in the LMFDB \cite{lmfdb}.

	For composite levels, the cyclic $N$-isogeny $\phi: E \to E'$ corresponding to
	a rational point on $X_{0}(N)$ generally factors into a composition of isogenies
	of prime degrees defined over $\mathbb{Q}$. This factorization explains the
	multiplicity of rational points observed for certain composite levels. For instance,
	in the case of $N=15$, the underlying elliptic curves admit a $\mathbb{Q}$-rational
	3-isogeny (and its dual) as well as a $\mathbb{Q}$-rational 5-isogeny (and its
	dual). Consequently, the four non-cuspidal rational points on $X_{0}(15)$ arise
	precisely from the four possible compositions of these prime degree maps.

	To explicitly construct these isogenies, we implemented Algorithm
	\ref{al:isogeny} using SageMath \cite{sagemath}. For the coordinate systems of
	the modular curves, we employed the defining equations constructed by Yang
	\cite{Yang} for the levels $N \le 27$, and the canonical models computed by Galbraith
	\cite{Galbraith-thesis} for the higher genus cases $N \in \{37, 43, 67, 163\}$.

	For the specific case of $N=67$, where the curve has genus 5, we utilized a specific
	basis of cuspforms to obtain a system of defining equations distinct from the model
	in \cite{Galbraith-thesis}. We computed a basis $\{x_{0}, \dots, x_{4}\}$ for
	the space $S_{2}(\Gamma_{0}(67))$ with the following $q$-expansions:
	\begin{align*}
		x_{0} & = q^{3} - q^{4} - q^{5} + q^{6} - q^{8} + O(q^{10}),                   \\
		x_{1} & = q^{2} - q^{5} - q^{6} - q^{7} - q^{8} + q^{9} + O(q^{10}),           \\
		x_{2} & = q^{2} - q^{3} - q^{4} - q^{6} + q^{7} + 2q^{8} + 2q^{9} + O(q^{10}), \\
		x_{3} & = q^{2} + q^{3} + 2q^{5} - q^{6} - q^{7} - q^{9} + O(q^{10}),          \\
		x_{4} & = q + 2q^{5} - q^{9} + O(q^{10}).
	\end{align*}
	Using the linear dependencies among the products $x_{i} x_{j}$, we determined
	that the canonical image of $X_{0}(67) \subset \mathbb{P}^{4}$ is defined by
	the intersection of the following three quadrics:
	\begin{align*}
		0 & = x_{0}^{2} - x_{0} x_{2} + x_{0} x_{4} - x_{1}^{2} - x_{1} x_{3} + x_{1} x_{4} - x_{2}^{2} + x_{2} x_{3} - x_{2} x_{4},   \\
		0 & = x_{0} x_{1} - x_{0} x_{2} + x_{0} x_{4} - 2 x_{1}^{2} - x_{2}^{2} + x_{2} x_{3} - x_{2} x_{4} - x_{3}^{2} + x_{3} x_{4}, \\
		0 & = x_{0} x_{3} - x_{1}^{2} + x_{1} x_{2} + x_{1} x_{3} - x_{1} x_{4} + x_{2} x_{4}.
	\end{align*}
	By performing a brute-force search for rational solutions on this intersection,
	we identified a non-cuspidal point. To represent this point explicitly, we
	fixed the projective coordinates $x_{1} = -5$ and $x_{2} = -4$, which determined
	the integer representative $P = [3 : -5 : -4 : 2 : 9]$. Using the remaining
	three coordinates ($x_{0}, x_{3}, x_{4}$), we constructed an affine chart with
	variables $X = x_{3}/x_{0}$ and $Y = x_{4}/x_{0}$. The $q$-expansions of these
	affine coordinates are given by
	\begin{align*}
		X & = q^{-1}+ 2 + 3q + 6q^{2} + 6q^{3} + \cdots, \\
		Y & = q^{-2}+ q^{-1}+ 2 + 2q + 5q^{2} + \cdots.
	\end{align*}
	Evaluating these functions at the point $P$ yields the values
	$(X(P), Y(P)) = (2/3, 3)$.

	For the highest level $N=163$, the modular curve $X_{0}(163)$ has genus~13. We
first construct a plane model of $X_{0}(163)$ in order to apply
Algorithm~\ref{al:isogeny}.
	Following Galbraith's method with a basis $\{x_{0}, \dots, x_{12}\}$ of
	$S_{2}(\Gamma_{0}(163))$, we defined the plane coordinates $X = x_{1}/x_{11}$
	and $Y = x_{6}/x_{11}$. The $q$-expansions of these coordinates are:
	\begin{align*}
		\label{eq:qexp163}X & = q^{-2}+ q - 2q^{2} + q^{3} + q^{4} + \cdots, \\
		Y                   & = q^{-3}+ 1 - q + 2q^{2} - 3q^{4} + \cdots.
	\end{align*}
	We determined that $X$ and $Y$ satisfy a plane equation of total degree 16 with
	118 terms:
	\begin{equation}
		\label{eq:plane163}27X^{14}Y^{2}+ 22X^{13}Y^{3}+ 102X^{12}Y^{4}+ 121X^{11}Y^{5}
		+ \cdots + 4X^{2}Y^{4}= 0
	\end{equation}
	However, we encountered a computational obstruction in the final step of Algorithm
	\ref{al:isogeny}. The rational maps expressing the modular invariants $a_{4}$
	and $a_{6}$ in terms of $X$ and $Y$ involve polynomials of degree exceeding 40
	with astronomically large integer coefficients. This implies that even if a rational
	point $P$ were identified on the curve (\ref{eq:plane163}), substituting its
	coordinates into these maps would cause severe intermediate expression swell, rendering
	the direct algebraic evaluation computationally intractable.

	To circumvent this, we adopted an analytic approach. Since the unique non-cuspidal
	rational point on $X_{0}(163)$ corresponds to the CM field $\mathbb{Q}(\sqrt{-163}
	)$, we utilized the explicit Heegner point $\tau \in \mathbb{H}$:
	\[
		\tau = \frac{-163 + \sqrt{-163}}{326}.
	\]
	We evaluated the modular forms $a_{4}(\tau)$ and $a_{6}(\tau)$ directly at this
	$\tau$ with 4000 bits of precision (approx. $10^{-1200}$ error) and
	successfully recovered the exact rational invariants using rational
	reconstruction. The computed invariants for the domain curve $E$ and the codomain
	curve $E'$ are given by:
	\begin{equation}
		\label{eq:163_invariants}
		\begin{aligned}
			E  & : \quad a_{4} = -\frac{543605}{75481344}, \quad   &  & a_{6} = \frac{4936546769}{20985021333504}, \\
			E' & : \quad a'_{4} = -\frac{3335}{12303459072}, \quad &  & a'_{6} = -\frac{185801}{3420558477361152}.
		\end{aligned}
	\end{equation}
	We confirmed that both constructed curves belong to the canonical isogeny
	class \href{https://www.lmfdb.org/EllipticCurve/Q/26569.a1}{\textbf{26569.a1}}($j
	=-640320^{3}$). Specifically, $E$ and $E'$ are identified as the quadratic twists
	by the factors $D=4344$ and $D'=-708072$, respectively. The observed relation $D
	' = -163 \times D$ aligns perfectly with the theory of complex multiplication,
	reflecting that the cyclic 163-isogeny is induced by the endomorphism
	associated with $\sqrt{-163}$.

	Finally, as a consistency check of our plane model, we evaluated the
	coordinates $X$ and $Y$ at the same Heegner point $\tau$. Using a precision of
	$10^{-300}$, we recovered the simple rational coordinates:
	\[
		P = (X(\tau), Y(\tau)) = \left( \frac{9}{10}, -\frac{6}{5}\right).
	\]
	We verified that this point $P$ exactly satisfies the plane equation \eqref{eq:plane163},
	verifying the correctness of our algebraic model.

	By evaluating the modular generators $a_{4}$ and $a_{6}$ at the rational points
	$P \in X_{0}(N)(\mathbb{Q})$ on these specific models, we recovered the
	Weierstrass coefficients for the associated domain and codomain elliptic
	curves. Table \ref{tab:sporadic_isogenies} presents the complete classification
	results. For each rational point $P$ (expressed in the coordinates of the chosen
	model), we list the computed invariants $(a_{4}, a_{6})$ and $(a'_{4}, a'_{6})$.
	The resulting isogeny $E \to E'$ is identified by the LMFDB label $\mathbf{L}$ and an
	integer $D$, denoted as $\mathbf{L}^{(D)}$, representing the quadratic twist by $D$.
	% longtable setup
	{\small \setlength{\LTcapwidth}{\textwidth}
	\renewcommand{\arraystretch}{2.2} % Increase row height for dfrac
	\begin{longtable}{c|c|c|c} \caption{Explicit construction of sporadic $\mathbb{Q}$-rational cyclic $N$-isogenies.} \label{tab:sporadic_isogenies} \\

	\hline \textbf{Level} & \textbf{Point} $P$ & \textbf{Domain} $E$ and \textbf{Codomain} $E'$ & \textbf{Isogeny Class} \\ $N$ & $(X, Y)$ & $(a_{4}, a_{6} ; a'_{4}, a'_{6})$ & $E \to E'$ \\ \hline \endfirsthead

	\multicolumn{4}{c}%
	{{\bfseries \tablename\ \thetable{} -- continued from previous page}} \\ \hline \textbf{Level} & \textbf{Point} $P$ & \textbf{Domain} $E$ and \textbf{Codomain} $E'$ & \textbf{Isogeny Class} \\ $N$ & $(X, Y)$ & $(a_{4}, a_{6};a'_{4}, a'_{6})$ & $E \to E'$ \\ \hline \endhead

	\hline \multicolumn{4}{r}{ } \\ \endfoot

	\hline \endlastfoot

	% Level 11%
	\multirow{3}{*}{11} & $(5, 5)$ & $\left(-\frac{4323}{169}, -\frac{109406}{2197};-\frac{3}{169}, \frac{86}{24167}\right)$ & $\href{https://www.lmfdb.org/EllipticCurve/Q/121.a1}{\textbf{121.a1}}^{(39)}\to \href{https://www.lmfdb.org/EllipticCurve/Q/121.c1}{\textbf{121.c1}}^{(-429)}$ \\* & $(16, -61)$ & $\left(-\frac{363}{169}, -\frac{10406}{2197};-\frac{393}{1859}, \frac{9946}{265837}\right)$ & $\href{https://www.lmfdb.org/EllipticCurve/Q/121.c1}{\textbf{121.c1}}^{(39)}\to \href{https://www.lmfdb.org/EllipticCurve/Q/121.a1}{\textbf{121.a1}}^{(-429)}$ \\* & $(5, -6)$ & $\left(-\frac{33}{2}, -\frac{847}{32};-\frac{3}{22}, \frac{7}{352}\right)$ & $\href{https://www.lmfdb.org/EllipticCurve/Q/121.b1}{\textbf{121.b1}}^{(-486)}\to \href{https://www.lmfdb.org/EllipticCurve/Q/121.b1}{\textbf{121.b1}}^{(66)}$ \\ \hline

	% Level 14
	\multirow{2}{*}{14} & $(2, 2)$ & $\left(-\frac{2380}{121}, -\frac{44688}{1331};-\frac{20}{847}, \frac{16}{9317}\right)$ & $\href{https://www.lmfdb.org/EllipticCurve/Q/49.a2}{\textbf{49.a2}}^{(22)}\to \href{https://www.lmfdb.org/EllipticCurve/Q/49.a1}{\textbf{49.a1}}^{(-154)}$ \\* & $(9, -33)$ & $\left(-\frac{560}{121}, -\frac{6272}{1331};-\frac{85}{847}, \frac{114}{9317}\right)$ & $\href{https://www.lmfdb.org/EllipticCurve/Q/49.a1}{\textbf{49.a1}}^{(11)}\to \href{https://www.lmfdb.org/EllipticCurve/Q/49.a2}{\textbf{49.a2}}^{(-77)}$ \\ \hline

	% Level 15
	\multirow{4}{*}{15} & $(-2, -2)$ & $\left(3165, 31070;-3, \frac{118}{5}\right)$ & $\href{https://www.lmfdb.org/EllipticCurve/Q/50.b2}{\textbf{50.b2}}^{(-3)}\to \href{https://www.lmfdb.org/EllipticCurve/Q/50.a1}{\textbf{50.a1}}^{(-15)}$ \\* & $(3, -2)$ & $\left(-\frac{18075}{961}, -\frac{935350}{29791};-\frac{87}{4805}, \frac{842}{744775}\right)$ & $\href{https://www.lmfdb.org/EllipticCurve/Q/50.a2}{\textbf{50.a2}}^{(93)}\to \href{https://www.lmfdb.org/EllipticCurve/Q/50.b1}{\textbf{50.b1}}^{(465)}$ \\* & $\left(-\frac{13}{4}, \frac{9}{8}\right)$ & $\left(-675, -79650;\frac{211}{15}, -\frac{6214}{675}\right)$ & $\href{https://www.lmfdb.org/EllipticCurve/Q/50.a1}{\textbf{50.a1}}^{(1)}\to \href{https://www.lmfdb.org/EllipticCurve/Q/50.b2}{\textbf{50.b2}}^{(5)}$ \\* & $(8, -27)$ & $\left(-\frac{3915}{961}, -\frac{113670}{29791};-\frac{241}{2883}, \frac{37414}{4021785}\right)$ & $\href{https://www.lmfdb.org/EllipticCurve/Q/50.b1}{\textbf{50.b1}}^{(-31)}\to \href{https://www.lmfdb.org/EllipticCurve/Q/50.a2}{\textbf{50.a2}}^{(-155)}$ \\ \hline
	% Level 17
	\multirow{2}{*}{17} & $\left(\frac{11}{4}, -\frac{15}{8}\right)$ & $\left(-\frac{87567}{5120}, -\frac{2230213}{81920};-\frac{1119}{87040}, \frac{14891}{23674880}\right)$ & $\href{https://www.lmfdb.org/EllipticCurve/Q/14450.b1}{\textbf{14450.b1}}^{(-30)}\to \href{https://www.lmfdb.org/EllipticCurve/Q/14450.b2}{\textbf{14450.b2}}^{(-510)}$ \\* & $(7, -21)$ & $\left(-\frac{19023}{5120}, -\frac{253147}{81920};-\frac{303}{5120}, \frac{7717}{1392640}\right)$ & $\href{https://www.lmfdb.org/EllipticCurve/Q/14450.b2}{\textbf{14450.b2}}^{(30)}\to \href{https://www.lmfdb.org/EllipticCurve/Q/14450.b1}{\textbf{14450.b1}}^{(510)}$ \\ \hline
	% Level 19
	19 & $(5, -9)$ & $\left(-\frac{19}{2}, -\frac{361}{32};-\frac{1}{38}, \frac{1}{608}\right)$ & $\href{https://www.lmfdb.org/EllipticCurve/Q/361.a1}{\textbf{361.a1}}^{(-2)}\to \href{https://www.lmfdb.org/EllipticCurve/Q/361.a1}{\textbf{361.a1}}^{(38)}$ \\ \hline
	% Level 21
	\multirow{4}{*}{21} & $(2, -1)$ & $\left(-\frac{17235}{1156}, -\frac{435447}{19652};-\frac{25}{3468}, \frac{131}{530604}\right)$ & $\href{https://www.lmfdb.org/EllipticCurve/Q/162.b4}{\textbf{162.b4}}^{(102)}\to \href{https://www.lmfdb.org/EllipticCurve/Q/162.b1}{\textbf{162.b1}}^{(102)}$ \\* & $(-1, 2)$ & $\left(-\frac{1515}{4}, -\frac{23053}{4};\frac{5}{4}, \frac{1}{12}\right)$ & $\href{https://www.lmfdb.org/EllipticCurve/Q/162.b3}{\textbf{162.b3}}^{(2)}\to \href{https://www.lmfdb.org/EllipticCurve/Q/162.b2}{\textbf{162.b2}}^{(2)}$ \\* & $\left(-\frac{1}{4}, \frac{1}{8}\right)$ & $\left(\frac{2205}{4}, -\frac{3087}{4};-\frac{505}{588}, \frac{23053}{37044}\right)$ & $\href{https://www.lmfdb.org/EllipticCurve/Q/162.b2}{\textbf{162.b2}}^{(-42)}\to \href{https://www.lmfdb.org/EllipticCurve/Q/162.b3}{\textbf{162.b3}}^{(-42)}$ \\* & $(5, -13)$ & $\left(-\frac{3675}{1156}, -\frac{44933}{19652};-\frac{1915}{56644}, \frac{48383}{20221908}\right)$ & $\href{https://www.lmfdb.org/EllipticCurve/Q/162.b1}{\textbf{162.b1}}^{(-238)}\to \href{https://www.lmfdb.org/EllipticCurve/Q/162.b4}{\textbf{162.b4}}^{(-238)}$ \\ \hline
	% Level 27
	27 & $(3, -9)$ & $\left(-\frac{15}{2}, -\frac{253}{32};-\frac{5}{486}, \frac{253}{629856}\right)$ & $\href{https://www.lmfdb.org/EllipticCurve/Q/27.a2}{\textbf{27.a2}}^{(6)}\to \href{https://www.lmfdb.org/EllipticCurve/Q/27.a2}{\textbf{27.a2}}^{(-2)}$ \\ \hline

	% Level 37
	\multirow{2}{*}{37} & $(0, -1)$ & $\left(-\frac{285371}{20580}, -\frac{180376009}{9075780};-\frac{11}{20580}, \frac{47}{9075780}\right)$ & $\href{https://www.lmfdb.org/EllipticCurve/Q/1225.h2}{\textbf{1225.h2}}^{(10)}\to \href{https://www.lmfdb.org/EllipticCurve/Q/1225.h1}{\textbf{1225.h1}}^{(10)}$ \\* & $[1:-1:0]$ & $\left(-\frac{15059}{20580}, -\frac{2380691}{9075780};-\frac{285371}{28174020}, \frac{180376009}{459715484340}\right)$ & $\href{https://www.lmfdb.org/EllipticCurve/Q/1225.h1}{\textbf{1225.h1}}^{(-370)}\to \href{https://www.lmfdb.org/EllipticCurve/Q/1225.h2}{\textbf{1225.h2}}^{(-370)}$ \\* \hline

	% Level 43
	43& $\left(0, -\frac{4}{3}\right)$ & $\left(-\frac{215}{36}, -\frac{12943}{2304};-\frac{5}{1548}, \frac{7}{99072}\right)$ & $\href{https://www.lmfdb.org/EllipticCurve/Q/1849.a1}{\textbf{1849.a1}}^{(-3)}\to \href{https://www.lmfdb.org/EllipticCurve/Q/1849.a1}{\textbf{1849.a1}}^{(129)}$ \\* \hline

	% Level 67
	67 & $\left(\frac{2}{3}, 3\right)$ & $\left(-\frac{3685}{722}, -\frac{974113}{219488};-\frac{55}{48374}, \frac{217}{14705696}\right)$ & $\href{https://www.lmfdb.org/EllipticCurve/Q/4489.a1}{\textbf{4489.a1}}^{(-38)}\to \href{https://www.lmfdb.org/EllipticCurve/Q/4489.a1}{\textbf{4489.a1}}^{(2546)}$ \\* \hline

	% Level 163
	163 & $\left(\frac{9}{10}, -\frac{6}{5}\right)$ & $\text{See equation \eqref{eq:163_invariants}.}$ & $\href{https://www.lmfdb.org/EllipticCurve/Q/26569.a1}{\textbf{26569.a1}}^{(4544)}\to \href{https://www.lmfdb.org/EllipticCurve/Q/26569.a1}{\textbf{26569.a1}}^{(-708072)}$ \\\end{longtable} }
	\bibliographystyle{siam}
	\bibliography{bibliography1}
\end{document}